\documentclass[]{amsart}
\usepackage[margin=1.25in]{geometry}

\usepackage{mathrsfs}
\usepackage{amsfonts}
\usepackage{latexsym,epsfig}
\usepackage{amsmath, amscd, amsthm,amssymb}
\usepackage[pdftex]{color}
\usepackage{comment}
\usepackage{marginnote}
\usepackage[colorlinks,citecolor=cyan,linkcolor=magenta]{hyperref}
\usepackage[nameinlink]{cleveref}
\usepackage{enumitem}
\usepackage{enumitem}
\usepackage{tikz}
\usepackage{tikz-cd}
\usetikzlibrary{matrix,decorations.pathmorphing,arrows}

\usepackage{todonotes}

\newcommand{\R}{\mathbb{R}}

\newcommand{\wt}{\widetilde}
\newcommand{\mc}{\mathcal}

\newcommand{\ol}{\overline}

\newcommand{\del}{\partial}

\newcommand{\FF}{\mathcal{F}}

\newcommand{\orb}{\mathcal{O}}

\newcommand\tsim{\kern-.4em\sim}

\newcommand\ssm{\smallsetminus}

\renewcommand{\phi}{\varphi}
\renewcommand{\epsilon}{\varepsilon}

\newcommand{\core}{\mathrm{core}}

\newcommand{\fr}{\mathrm{fr}}
\newcommand{\frs}{\mathrm{fr}^s}
\newcommand{\fru}{\mathrm{fr}^u}
\newcommand{\mb}{\mathbf}

\newtheorem{theorem}{Theorem}[section]
\newtheorem{lemma}[theorem]{Lemma}

\newtheorem{corollary}[theorem]{Corollary}

\theoremstyle{definition}

\newtheorem*{remark*}{Remark}

\DeclareMathOperator{\intr}{int}

\title[]{Simultaneous universal circles and continuous extension}

\author{Sérgio R. Fenley, Michael P. Landry, and Samuel J. Taylor}
\date{}                                           

\begin{document}

\begin{abstract}
Fenley proved that any foliation almost transverse to a quasigeodesic pseudo-Anosov flow in a closed atoroidal 3-manifold has the continuous extension property, meaning the inclusions of leaves into the universal cover continuously extend to their ideal boundaries. This article gives an alternate proof of an upgraded version of this: the associated Cannon-Thurston map for the flow, constructed by Frankel and Fenley, organizes all of the leafwise continuous extensions.
The proof uses the fact that the boundary of the flowspace is naturally a universal circle for the foliation.
\end{abstract}
\maketitle

\section{Introduction}
\label{sec:intro}

Let $M$ be a closed hyperbolic $3$-manifold. The universal cover $\wt M$ is isometric to hyperbolic 3-space, and the ideal boundary of $\wt M$ is a 2-sphere that we denote $S^2_\infty$. 
Given a taut foliation $\mc F$ of $M$, we denote its lift to $\wt M$ by $\wt {\mc F}$. Work of Plante \cite{plante1975foliations}, Sullivan \cite{sullivan1976cycles} and Gromov \cite{Gromov}
implies that leaves of $\wt {\mc F}$ are uniformly Gromov hyperbolic.
In particular each leaf $\lambda$ of $\wt {\mc F}$ has a naturally associated hyperbolic boundary $\del \lambda$.

We say that $\mc F$ has the \emph{continuous extension property} if for each leaf $\lambda$ of $\wt {\mc F}$, the inclusion map $i_\lambda \colon \lambda\hookrightarrow \wt M$ extends continuously to a map
\[
\mb i_\lambda \colon \lambda\cup\del \lambda\to \wt M \cup S^2_\infty.
\]
Such an extension must be unique by continuity. We will denote the restriction of $\mb i_\lambda$ to $\partial \lambda$ also by $\mb i_\lambda$. 
The \emph{limit set} of $\lambda$ is the set of accumulation 
points of $\lambda$ at infinity, and when $\mc F$ has the continuous extension property the limit
set of any $\lambda$ is continuously parameterized by $\mb i_\lambda\colon \del \lambda\to S^2_\infty$.
Cannon and Thurston famously showed that the leaves of a fibration of $M$ over $S^1$, when lifted to $\wt M$, extend continuously to $S^2_\infty$ \cite{CannonThurston}. Moreover, the continuous extensions restricted to the ideal boundaries are sphere-filling curves. The question of when a general foliation has the continuous extension property is well known (see \cite[Question 10.2]{calegari2002problems}) and has been studied extensively by Fenley \cite{fenley_limitsets, fenley1999foliations,Fen09}.

In this article we focus on the continuous extension property when $\mc F$ is transverse to a quasigeodesic almost pseudo-Anosov flow $\varphi$ on $M$. 
In this setting, Fenley showed in \cite{Fen09} that the foliation $\mc F$ has the continuous extension property. The purpose of this article is to give a very short alternate proof of an upgraded version of this that uses the Cannon-Thurston map associated to the flowspace boundary of $\phi$. 

\begin{remark*}
An almost pseudo-Anosov flow is one obtained from a pseudo-Anosov flow by a mild modification called a ``dynamic blowup.'' (Often in the literature the modifier ``almost" is instead applied to the word ``transverse," as in ``$\FF$ is almost transverse to a pseudo-Anosov flow;" the meaning is the same). For a detailed treatment of dynamic blowups, see \cite[Section 3]{LMTstrongtst}. All the results we state here for pseudo-Anosov flows apply to almost pseudo-Anosov flows as well.
\end{remark*}

\subsection{Results}
Recall that the orbit space $\orb$ of $\phi$ is an open disk 
\cite{fenley2001quasigeodesic}, possessing a natural boundary $\partial \orb$ such that the induced action $\pi_1(M) \curvearrowright \orb \cup \partial \orb$ is continuous (\cite{fenley2012ideal}).

On the one hand, Landry-Minsky-Taylor show in \cite{LMT_simuc} that the action $\pi_1(M) \curvearrowright \partial \orb$ has the structure of a \emph{universal circle} of $\mc F$ in the sense of Thurston and Calegari-Dunfield \cite{CalDun_UC}. In particular, this implies that for each leaf $\lambda$ of $\wt {\mc F}$ there is a natural quotient map $\pi_\lambda \colon \partial \orb \to \partial \lambda$ that is \emph{monotone}, meaning that each point preimage is connected. 
The interiors of the non-singleton point preimages are called the \emph{gaps} of $\pi_\lambda$, and the complement of the gaps in $\partial \orb$ is called the  \emph{core} of $\pi_\lambda$. In particular, $\pi_\lambda$ canonically identifies $\partial \lambda$ with the quotient of $\mathrm{core}(\pi_\lambda)$ obtained by identifying the endpoints of each gap.

On the other hand, Frankel \cite{frankel2015quasigeodesic} and independently Fenley \cite{Fen16} proved the existence of a continuous, $\pi_1(M)$-equivariant \emph{Cannon-Thurston map} 
\[
\mb e \colon \partial \orb \to S^2_\infty.
\]
In fact, Frankel's result assumes only that $\phi$ is quasigeodesic. 

Our main theorem is that these two pictures, linking the boundary of $\orb$ to the ideal geometry of $\mc F$ and of $M$ respectively, are compatible:

\begin{theorem}\label{mainthm}
The map $\mb e \colon \partial \orb \to S^2_\infty$ induces a continuous map $\mb i_\lambda$ making the following diagram commute
\begin{center}
\begin{tikzcd}
\mathrm{core}(\pi_\lambda) \arrow{rr}{\mb e}\arrow[swap]{rd}{\pi_\lambda}& &S^2_\infty\\
 & \del \lambda\arrow[swap]{ru}{\mb i_\lambda}& 
\end{tikzcd}
\end{center}
and continuously extending the inclusion $i_\lambda\colon \lambda\hookrightarrow \wt M$, in the sense that the map defined by
\[
\mb i_\lambda(x)=
\begin{cases}
i_\lambda(x)  &\text{ if $x\in \lambda$}\\
\mb i_\lambda(x) &\text{ if $x\in \del \lambda$}
\end{cases}
\]
is continuous.
\end{theorem}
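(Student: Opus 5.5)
The plan has two parts. First I show that $\mb e$ is constant on the closure of each gap of $\pi_\lambda$, so that $\mb i_\lambda := \mb e\circ\pi_\lambda^{-1}$ is a well-defined function on $\del\lambda$. Then I show that the extension of $i_\lambda$ by this map is continuous on $\lambda\cup\del\lambda$. Continuity of $\mb i_\lambda$ on $\del\lambda$ alone is formal. The quotient $\mathrm{core}(\pi_\lambda)\to\del\lambda$ is a closed monotone surjection between compact Hausdorff spaces, hence a quotient map. A continuous map on $\mathrm{core}(\pi_\lambda)$ that is constant on fibers therefore descends continuously.

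\textbf{Gaps.} I will use the construction of $\pi_\lambda$ from \cite{LMT_simuc}. The leaf $\lambda$ projects homeomorphically onto an open region $U_\lambda\subset\orb$, since each orbit of $\wt\phi$ meets $\lambda$ at most once. Points of $\del\lambda$ correspond to the ideal points of $U_\lambda$ as seen from $\partial\orb$. A gap of $\pi_\lambda$ is an interval $I\subset\partial\orb$ cut off from $U_\lambda$ by a boundary component $\ell$ of $U_\lambda$ in $\orb$. Such an $\ell$ is a curve in $\orb$ whose two ends land at the endpoints of $\ol I$, and the orbits through $\ell$ are not met by $\lambda$. The claim is that $\mb e$ is constant on $\ol I$. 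In the Frankel/Fenley picture, $\mb e$ identifies the two ends of leaves of the stable and unstable foliations, and $\mb e$ is constant on the closure of each complementary region of such a leaf. I would show that $\ell$, or rather the gap region it bounds, is contained in the region cut off by a ray of a stable or unstable leaf, or by a chain of such rays. The expected mechanism is that the region beyond $\ell$ consists of orbits escaping $\lambda$ and never hitting it, which forces a lozenge or half-leaf structure at its boundary. I expect to need a $\pi_1$-equivariance and compactness argument here, using that $\lambda$ is a lift of a compact-leaf-free taut foliation transverse to $\phi$. If this fails, I would instead use \cite{LMT_simuc} to show directly that each gap endpoint pair is linked to an ideal point of a stable or unstable leaf, and then use the fact that $\mb e$ collapses ideal polygons.

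\textbf{Continuity at $\del\lambda$.} Take $x_n\in\lambda$ converging to $p\in\del\lambda$. I must show that $i_\lambda(x_n)\to\mb i_\lambda(p)$ in $\wt M\cup S^2_\infty$. Let $\theta_n$ be the orbit through $x_n$, viewed as a point of $U_\lambda\subset\orb$. Convergence $x_n\to p$ in $\lambda\cup\del\lambda$ should translate into convergence of $\theta_n$ in $\orb\cup\partial\orb$ toward $\pi_\lambda^{-1}(p)$. This is because the leafwise hyperbolic compactification of $\lambda$ and the compactification of $U_\lambda$ by $\partial\orb$ are compatible, which is the content of the universal circle structure. Quasigeodesity of $\phi$ and the defining property of $\mb e$ then give the conclusion. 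When orbits $\theta_n$ converge to a set $J\subset\partial\orb$ on which $\mb e$ is constant, every point of those orbits, in particular $x_n$, converges to that value in $\wt M\cup S^2_\infty$. This uses the uniform fellow-travelling of quasigeodesic orbits and Frankel's characterization of $\mb e$ via orbit endpoints. It needs $x_n$ to escape compact sets of $\wt M$, which holds because leaves of $\wt{\mc F}$ are properly embedded.

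\textbf{Main obstacle.} I expect the main obstacle to be this last step. It requires the quantitative statement that orbits $\theta_n$ which leave every compact set of $\orb$ toward a point of $\partial\orb$ have all their points converge in $\wt M\cup S^2_\infty$ to the $\mb e$-image. Without that statement, points of $\theta_n$ far from the limit could remain in a bounded region. I would obtain it from uniform quasigeodesic constants together with the fact that the $\orb$-topology is controlled by Hausdorff distance on compact pieces.
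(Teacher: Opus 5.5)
There are two genuine gaps.

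First, your \textbf{Gaps} step aims at a false statement. The map $\mb e|_{\del\orb}$ has uniformly bounded fibers (this is exactly what drives \Cref{cor:finite}). So it is not constant on the closure of a gap, which is a nondegenerate arc. Nor is it constant on the closure of a complementary region of a stable or unstable leaf. What the diagram needs is weaker: $\core(\pi_\lambda)$ meets the closure of a gap only in its two endpoints, so $\mb e$ need only agree at the two endpoints of each gap. The ingredient you are missing is structural, not dynamical. Every frontier component of the shadow $\Omega_\lambda$ is a stable or unstable slice leaf \cite[Theorem 4.1]{Fen09}, and every gap is spanned by a single frontier chain \cite[Theorem 1.2]{LMT_simuc}. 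Now $e^+$ is constant on stable leaves and $e^-$ on unstable leaves, and $\mb e^\pm$ are continuous on $\ol\orb$ and both restrict to $\mb e$ on $\del\orb$. Hence the two ideal ends of each link of the chain have equal $\mb e$-values, and so do the two ends of the gap. No lozenge or compactness argument is needed.

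Second, in the continuity step it is not true that $x_n\to p$ forces the projections $\theta_n$ toward $\pi_\lambda^{-1}(p)\subset\del\orb$. Suppose $\theta_n\in\Omega_\lambda$ converge to a point $z$ on a frontier slice leaf $l\subset\orb$. Then $x_n$ converges to the point of $\del\lambda$ determined by the chain containing $l$, yet $\theta_n$ stays in a compact subset of $\orb$. The fiber of the extended map $\omega_\lambda\colon\ol\Omega_\lambda\to\lambda\cup\del\lambda$ over such a point is the whole frontier chain together with its two ideal ends. In this case uniform fellow-travelling is beside the point, since the orbits $\langle\theta_n\rangle$ converge to the fixed orbit $\langle z\rangle$. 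What must be shown is that the particular points $x_n=\langle\theta_n\rangle\cap\lambda$ escape toward the forward end of these orbits when $l$ is stable, or the backward end when $l$ is unstable. The paper gets this from \Cref{l.below}: $\langle l\rangle$ lies below $\lambda$ for stable $l$ and above it for unstable $l$. This gives $\ol s(z)=(z,+\infty)$ in the lens compactification (\Cref{l.cont1}). Continuity of $h\colon\mb L\to\wt M\cup S^2_\infty$ then yields $x_n\to e^+(z)$, which is the common $\mb e$-value of the ends of $l$.

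Finally, passing from convergence in $\lambda\cup\del\lambda$ to accumulation in $\ol\Omega_\lambda$ requires continuity of the full map $\omega_\lambda$ (\Cref{lem:omega}). The universal circle structure of \cite{LMT_simuc} supplies only its restriction $\del\ol\Omega_\lambda\to\del\lambda$, so this step also needs its own argument.
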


In other words, \Cref{mainthm} says that the Cannon-Thurston map $\mb e$ simultaneously and equivariantly organizes all limit sets of leaves of $\wt{\mc F}$, each of which is canonically, continuously parametrized by a quotient of $\del \orb$. This places a significant restriction on the Cannon-Thurston map associated to any quasigeodesic pseudo-Anosov flow almost transverse to $\mc F$, which may be a starting point for researchers trying to understand the collection of all such flows. 

As a demonstration of \Cref{mainthm}'s utility, we use it to easily prove that the leafwise boundary maps are uniformly finite-to-one, which was previously not known:

\begin{corollary}[Finite fibers of $\mb i_\lambda$]
\label{cor:finite}
Let $\mc F$ be a foliation transverse to a quasigeodesic almost pseudo-Anosov flow. Then there is a constant $k \ge 1$ such that for any leaf $\lambda$ of $\wt{\mc F}$, the extension $\mb i_\lambda \colon \partial \lambda \to S^2_\infty$ has $\# \left (\mb i_\lambda^{-1}(p) \right ) \le k$ for any $p \in S^2_\infty$. 
\end{corollary}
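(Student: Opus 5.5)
By \Cref{mainthm}, the fiber $\mb i_\lambda^{-1}(p)$ is the image under $\pi_\lambda$ of $\mb e^{-1}(p)\cap \mathrm{core}(\pi_\lambda)$. So the plan is to bound the fibers of $\mb e$ on $\partial\orb$ and then push the bound through $\pi_\lambda$. The map $\pi_\lambda$ restricted to the core is injective except that the two endpoints of each gap are identified. Hence
\[
\#\,\mb i_\lambda^{-1}(p)\le \#\bigl(\mb e^{-1}(p)\cap\mathrm{core}(\pi_\lambda)\bigr)
\]
whenever that set is finite. It therefore suffices to find a uniform $k$ with $\#\,\mb e^{-1}(p)\le k$ for all $p\in S^2_\infty$.

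\textbf{Step 1: finiteness of the fibers of $\mb e$.} I would use the known description of the Cannon--Thurston map from Frankel and Fenley. Two points of $\partial\orb$ have the same image under $\mb e$ if and only if they are connected by a finite chain of leaves of the stable/unstable foliations of $\orb$ whose ideal points pair them up. Equivalently, they are endpoints of leaves or faces of a common ``ideal polygon'' built from a chain of lozenges or a singular leaf. Since $M$ is hyperbolic, hence atoroidal, chains of lozenges in $\orb$ have uniformly bounded length. Singular orbits have a bounded number of prongs because $M$ is closed and the flow has finitely many singular orbits. The dynamic blowup only adds finitely many blown-up orbits per leaf, so it does not break these bounds. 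Together these give a uniform bound $k$ on the number of points of $\partial\orb$ in any fiber of $\mb e$.

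\textbf{Step 2: pushing the bound to $\partial\lambda$.} Given $p$, the finite set $\mb e^{-1}(p)$ meets $\mathrm{core}(\pi_\lambda)$ in at most $k$ points. Commutativity of the diagram in \Cref{mainthm} shows that every point of $\mb i_\lambda^{-1}(p)$ has a $\pi_\lambda$-preimage in this set. Here one uses that $\pi_\lambda$ maps the core onto $\partial\lambda$, since each fiber of $\pi_\lambda$ contains a core point, namely an endpoint of a gap or the singleton itself. The bound $\#\,\mb i_\lambda^{-1}(p)\le k$ follows, and $k$ is independent of $\lambda$.

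\textbf{Main obstacle.} I expect the difficulty to be Step 1: justifying a \emph{uniform} bound on the fibers of $\mb e$. One has to recall or reprove the characterization of the identifications made by $\mb e$ and then bound the corresponding ideal polygons. My first attempt would cite the structure theorems of Fenley and Frankel for the Cannon--Thurston map of a quasigeodesic pseudo-Anosov flow: nonseparated points come from perfect fits, and these form bounded chains in atoroidal manifolds. I would then use cocompactness of the $\pi_1(M)$-action to make all constants uniform.
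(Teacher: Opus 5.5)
Your outline matches the paper's: bound the fibers of $\mb e\colon\partial\orb\to S^2_\infty$ uniformly, then push the bound through the diagram of \Cref{mainthm}. Step 2 is correct, and more explicit than the paper's ``using the diagram.'' Every fiber of $\pi_\lambda$ contains a core point, so distinct points of $\mb i_\lambda^{-1}(p)$ have distinct preimages in $\mb e^{-1}(p)\cap\core(\pi_\lambda)$.

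\textbf{The gap is in Step 1, which carries the whole content of the corollary.} You justify the uniform bound by ``$M$ is hyperbolic, hence atoroidal, so chains of lozenges have uniformly bounded length,'' and then appeal to cocompactness. Neither works.

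Atoroidality does not bound chains of lozenges or perfect fits; it does not even make them finite. Take a skewed $\mathbb{R}$-covered Anosov flow on a closed hyperbolic 3-manifold. Its orbit space is a diagonal strip, and the stable and unstable leaves form infinite staircases of perfect fits. Every closed orbit is freely homotopic to infinitely many others. Such a flow lives on a closed hyperbolic manifold with a cocompact deck action and finitely many (here zero) singular orbits. So no argument using only atoroidality, prong bounds, and cocompactness can produce your $k$. What distinguishes your setting from this example is quasigeodesicity, and your Step 1 never uses it. Your characterization of fibers as ``finite chains'' also already presupposes the finiteness you need to prove.

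The paper's route goes as follows.
\begin{enumerate}
\item By \cite[Section 6]{Fen16}, a fiber $\mb e^{-1}(p)$ consists of the ideal points of a collection of pairwise disjoint leaves of $\orb^{s/u}$ whose closure in $\ol\orb$ is connected.
\item If the collection has $k$ leaves, $\orb$ contains a chain of perfect fits of length at least $k$.
\item By \cite[Theorem C]{Fen16}, such a chain forces at least $k$ pairwise freely homotopic closed orbits of $\phi$.
\item By \cite[Main Theorem]{Fen16}, a quasigeodesic pseudo-Anosov flow on a closed hyperbolic manifold has a uniform bound on the size of its free homotopy classes of closed orbits.
\end{enumerate}
The missing ideas are steps 3 and 4: converting a long chain of perfect fits into a large free homotopy class, and bounding such classes via quasigeodesicity. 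Without them you cannot get the uniform $k$.
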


\subsection*{Acknowledgements}
The completion of this article was supported by the National Science Foundation under Grant No. DMS--2424139, while the authors were in residence at the Simons Laufer Mathematical Sciences Institute in Berkeley, California, during the Spring 2026 semester.
Additionally, Fenley was partially supported by the Simons Foundation,
Landry was supported by NSF grant DMS--2405453, and Taylor was supported by NSF grant DMS--2503113 and the Simons Foundation.

\section{The flowspace and its boundary}
Throughout this article, $\varphi$ will denote a flow on $M$ that is both quasigeodesic and almost pseudo-Anosov, although we will occasionally highlight which properties follow from which hypothesis.

Let $\wt \phi$ be the lift of $\phi$ to $\wt M$. Almost pseudo-Anosov and quasigeodesic flows are \emph{product-covered} \cite{fenley2001quasigeodesic, calegari2006universal}, meaning that $\wt\phi$ is topologically conjugate to the vertical flow on $\R^3$. Hence the quotient of $\wt M$ by the orbits of $\wt\phi$ is a plane, called the \emph{flowspace}, which we denote by $\orb$. The action of $\pi_1(M)$ on $\wt M$ by deck transformations preserves the orbits, as well as the weak stable/unstable foliations, of $\wt \phi$. Hence the action descends to an action on $\orb$ that preserves two singular foliations, denoted $\orb^{s/u}$ for stable/unstable respectively.

Given a subset $S\subset \orb$, we use the notation $\langle S\rangle$ to refer to the preimage of $S$ under the projection $\wt M\to \orb$. For example, if $p\in \orb$, then $\langle p\rangle$ is a flowline.

The flowspace has no natural metric and should not be thought of as a geometric object. However, Fenley \cite{fenley2012ideal} and Frankel \cite{frankel2013quasigeodesic} showed (for pseudo-Anosov and quasigeodesic flows, respectively) that $\orb$ can be canonically compactified to a closed disk
\[
\ol \orb=\orb\cup \del\orb,
\]
by adding a circle $\del\orb$ in such a way that the $\pi_1$-action on $\orb$ extends continuously to one on $\ol\orb$.
Each properly embedded ray in a leaf of $\orb^{s/u}$ limits on a unique point in $\del\orb$. Hence each leaf of $\orb^{s/u}$ has a well-defined circular order on its finitely many ends coming from $\del \orb$. Indeed, this circular order can be recovered directly from $\orb$ and used to \emph{define} the circle $\partial \orb$; see \cite{frankel2013quasigeodesic} for the more general
case of quasigeodesic flows.

A copy of $\R$ properly embedded in a leaf $\ell$ of $\orb^{s}$ or $\orb^u$ 
is called a \emph{stable} or \emph{unstable slice leaf}, respectively.

\section{Endpoint maps}
\label{s.3}
Since $\phi$ is quasigeodesic, each orbit of $\wt \phi$ has a well-defined forward and backward endpoint in $S^2_\infty$. Moreover, Calegari showed \cite{calegari2006universal} that these endpoints vary continuously; that is, the equivariant maps 
\[
E^\pm\colon \wt M\to S^2_\infty
\]
that take each $x$ to the forward/backward endpoints of the orbit through $x$ are continuous. 
The maps $E^\pm$ are constant on orbits of $\wt \phi$, so they induce continuous, $\pi_1(M)$-equivariant \emph{positive and negative endpoint maps}
\[
e^\pm \colon \orb \to S^2_\infty.
\]
Frankel proves in \cite{frankel2015quasigeodesic} that the endpoint maps $e^\pm$ extend continuously and equivariantly to maps 
\[
\mb e^\pm\colon \ol\orb\to S^2_\infty.
\]
Moreover, the extensions agree when restricted to $\del\orb$, giving a continuous equivariant map 
\[
\mb e \colon \partial \orb \to S^2_\infty. 
\]
This generalizes the Cannon-Thurston theorem: $\mb e$ is surjective because its image is a closed $\pi_1(M)$-invariant set and $\pi_1(M)$ acts minimally on $S^2_\infty$.

Each leaf of $\orb^{s/u}$ is contained in $(e^{\pm})^{-1}(p)$ for some $p\in S^2_\infty$. In other words, $e^+$ is constant on leaves of $\orb^s$ and $e^-$ is constant on leaves of $\orb^u$.

\section{The lens compactification and its boundary}
\label{s.4}

We now describe some machinery developed by Fenley \cite{Fen16} in the case of quasigeodesic pseudo-Anosov flows and by Frankel \cite{frankel_closedorbits} in the general setting of quasigeodesic flows.

Fixing an equivariant identification of $\wt M$ with $\orb\times \R$, we can compactify $\wt M$ by thinking of it as the interior of the closed cylinder $\ol\orb\times \ol \R$, where $\ol\R=[-\infty,+\infty]$. The \emph{lens compactification} of $\wt M$ is
\[
\mb L=\ol\orb\times \ol \R /\{p\times \ol\R\mid p\in\del\orb\}
\]
where we collapse the annulus face of the cylinder along $\ol \R$-fibers. 
There is a natural \emph{flattening map} $f\colon \mb L\to \ol\orb$ induced by the projection of $\ol\orb\times \R$ to the first factor.

The boundary of $\mb L$ is a sphere that can be thought of as two copies of $\orb$, corresponding to $\orb\times\{\pm\infty\}$ and which we call $\orb^\pm$ respectively, together with an equatorial $\del\orb$. 
We will call this the \emph{lens boundary} and denote it by $S^2_{\mb L}$, i.e. 
\[
S^2_{\mb L}=\orb^+\cup\orb^-\cup \del\orb.
\]
There is a natural $\pi_1$-equivariant map $m\colon S^2_{\mb L}\to S^2_\infty$ defined by
\[
m(x)=\begin{cases}
e^+\circ f(p)& \text{if $p\in \orb^+$}\\
e^-\circ f(p)& \text{if $p\in \orb^-$}\\
\mb e\circ f(p)& \text{if $p\in \del\orb$.}\\
\end{cases}
\]
As Fenley explains in \cite[Section 8]{Fen16}, this continuously extends the identity map $\wt M\to \wt M$, i.e. the map $h\colon \mb L\to \wt M\cup S^2_\infty$ defined by
\[
h(x)=\begin{cases}
x& \text{if $p\in \intr \mb L=\wt M$}\\
m(x)& \text{if $p\in \del \mb L=S^2_{\mb L}$}\\
\end{cases}
\]
is continuous. 
In addition this map is $\pi_1(M)$-equivariant by construction.

\section{Shadows of leaves and continuous extension}

Let $\mc F$ be a foliation transverse to $\phi$, and denote the lift of $\mc F$ to $\wt M$ by $\wt\FF$. Since $M$ is hyperbolic, $\phi$ is \emph{transitive}, meaning it has a dense orbit \cite{Mos92, barthelme2024non}. Hence for all $p\in M$ we can find a long nearly closed orbit that can be closed up to a $\mc F$-transversal through $p$. This shows that $\mc F$ is taut.

Let $\Omega_\lambda$ be the projection of $\lambda$ to $\orb$ and $\ol \Omega_\lambda$ its closure in $\ol \orb$. This open, connected subspace of $\orb$ is called the \emph{shadow} of $\lambda$. Each component of the frontier $\fr_\orb(\Omega_\lambda)$ of $\Omega_\lambda$ in $\orb$ is a {stable} or {unstable slice leaf} (see \cite[Theorem 4.1]{Fen09}). 
We denote the union of the stable frontier components by $\frs(\Omega_\lambda)$ and the union of the unstable frontier components by $\fru(\Omega_\lambda)$.
A \emph{frontier chain} of $\Omega_\lambda$ is a sequence of frontier components $\ell_1,\dots, \ell_n$ such that $\ell_i$ shares one ideal point (in $\del \orb$) with $\ell_{i+1}$ for $1\le i\le n-1$. The number $n$ is the \emph{length} of the frontier chain. A frontier chain may be comprised of both stable and unstable slice leaves. By \cite[Lemma 6.13]{LMT_simuc}, the length of all frontier chains is finite (in fact, the length is uniformly bounded above by a constant depending only on $\phi$ and $\mc F$). 

The closure of $\Omega_\lambda$ in $\ol\orb$ is a closed disk that we denote by $\ol\Omega_\lambda$ and which we can think of as a union
\[
\ol\Omega_\lambda=\Omega_\lambda\cup \del_\infty \ol\Omega_\lambda\cup \fr^s(\Omega_\lambda)\cup \fr^u(\Omega_\lambda),
\]
where $\del_\infty \ol\Omega_\lambda$ is the \emph{limit set of $\Omega_\lambda$} defined as $\ol \Omega_\lambda \cap \partial \orb$.

The frontier of $\Omega_\lambda$ and its decomposition into chains is related to the ideal geometry of $\lambda$ as follows. 

\begin{lemma} \label{lem:omega}
The homeomorphism $\Omega_\lambda \to \lambda$ defined by $p \mapsto \langle p \rangle \cap \lambda$ extends to a continuous quotient map
\[
\omega_\lambda\colon \ol\Omega_\lambda\to \lambda\cup \del \lambda
\]
such that the restriction of $\omega_\lambda$ to $\del \ol \Omega_\lambda$ is monotone, and 
the image of each frontier chain is a single point.
\end{lemma}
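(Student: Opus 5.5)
We plan to construct $\omega_\lambda$ on the boundary of the closed disk $\ol\Omega_\lambda$ using the universal circle structure from \cite{LMT_simuc}, and then check continuity across $\Omega_\lambda$. The key input is that $\del\ol\Omega_\lambda$ is a circle made of two kinds of pieces: the limit set $\del_\infty\ol\Omega_\lambda\subset\del\orb$, and the closures of the frontier slice leaves. Each frontier slice leaf has two ideal endpoints in $\del\orb$. The frontier chains, together with their ideal endpoints, are exactly the maximal connected subsets of $\del\ol\Omega_\lambda$ that contain frontier leaves. The chain lengths are finite by \cite[Lemma 6.13]{LMT_simuc}, so each chain closure is a compact arc, possibly degenerate if it has length one.

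\textbf{Step 1: defining $\omega_\lambda$ on the boundary.} We collapse each closed chain arc to a point. The resulting quotient of $\del\ol\Omega_\lambda$ is again a circle, since the collapsing is monotone and there are infinitely many points of the limit set (the leaf is not compact). We identify this circle with $\del\lambda$ via the map $\pi_\lambda$ from \cite{LMT_simuc}. The construction there is exactly this: $\pi_\lambda$ collapses the complementary intervals of $\del_\infty\ol\Omega_\lambda$ in $\del\orb$. Each such interval is cut off by a frontier chain, so it is a gap of $\pi_\lambda$.

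To make this concrete, we would show the following. Given a point $\xi\in\del_\infty\ol\Omega_\lambda$ not in a chain, take a sequence $p_n\in\Omega_\lambda$ with $p_n\to\xi$ in $\ol\orb$. Then $\omega_\lambda(p_n)=\langle p_n\rangle\cap\lambda$ converges in $\lambda\cup\del\lambda$ to $\pi_\lambda(\xi)$. Similarly, if $p_n$ approaches any point of a chain closure, then $\omega_\lambda(p_n)$ converges to the single point $\pi_\lambda(\text{chain})$.

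Our approach is to use nested neighborhoods. For $\xi$, choose stable or unstable slice leaves $s_k\subset\Omega_\lambda$ whose endpoints both lie in $\del\orb$ and which cut off shrinking neighborhoods $U_k$ of $\xi$ in $\ol\orb$. The lifts $\langle s_k\rangle\cap\lambda$ are properly embedded lines in $\lambda$. By \cite{LMT_simuc}, they are uniform quasigeodesics in the leaf, or at least they have well-defined, distinct endpoints in $\del\lambda$ that converge to $\pi_\lambda(\xi)$. Their half-planes then form a neighborhood basis of $\pi_\lambda(\xi)$ in $\lambda\cup\del\lambda$, which gives continuity at $\xi$. For a chain $C$, the complementary region near $C$ is cut off by a sequence of slice leaves in $\Omega_\lambda$ that accumulate on the whole chain. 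We claim their lifts to $\lambda$ escape every compact set, and their endpoint pairs shrink to the single point $\pi_\lambda(C)$. The escape holds because points of $\lambda$ over points near the frontier have flow-lines which exit $\lambda$'s "$\mc F$-side", and compact subsets of $\lambda$ project to compact subsets of $\Omega_\lambda$.

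\textbf{Step 2: the remaining properties.} The resulting map is continuous on the compact space $\ol\Omega_\lambda$ and surjective, since its image contains $\lambda$ and is closed. Its target is Hausdorff, so the map is a quotient map. On the boundary circle, the restriction has connected point preimages: each preimage is either a point or a chain closure. Hence the restriction is monotone, and each frontier chain maps to a single point by construction.

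The main obstacle is Step 1's convergence statement near a chain. Here one must see that as $p_n$ approaches a frontier leaf, $\langle p_n\rangle\cap\lambda$ goes to infinity in $\lambda$ toward a single ideal point. The point is that it should not spread across an interval of $\del\lambda$. To handle this, we would first invoke the universal-circle correspondence of \cite{LMT_simuc}, which says that the gap of $\pi_\lambda$ bounded by a chain collapses to one point. We would then compare with nearby slice leaves crossing $\Omega_\lambda$ near the chain.
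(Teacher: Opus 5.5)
There is a genuine gap, and it is exactly where you place the main obstacle: you never prove continuity of $\omega_\lambda$ at points of $\fr(\Omega_\lambda)$. The fact from \cite{LMT_simuc} that a chain's gap collapses to one point controls only the ideal endpoints of rays. What you need to control is where the interior points $\omega_\lambda(p_n)$ go. Your proposed tool is a sequence of stable or unstable slice leaves lying in $\Omega_\lambda$, with both ends in $\del\orb$, accumulating on the whole chain. Such a sequence does not exist in general. By local product structure, any stable leaf passing close enough to a nonsingular point of an unstable frontier leaf (away from blown-up segments) crosses it, and so is not contained in $\Omega_\lambda$; the same holds with stable and unstable swapped. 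Hence a frontier chain containing both stable and unstable leaves cannot be approximated inside $\Omega_\lambda$ by leaves of either foliation.

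A similar problem affects your argument at ideal points. Suppose $\xi$ is an ideal point of both a stable and an unstable leaf (e.g.\ the common ideal point of a perfect fit). Then no single slice leaf cuts off arbitrarily small neighborhoods of $\xi$: a stable (resp.\ unstable) one would separate the stable (resp.\ unstable) leaf ending at $\xi$ from its other ideal points, and so would have to cross it. You need Fenley's polygonal neighborhood bases \cite[Proposition 3.33]{fenley2012ideal}. Their boundary paths, and even single leaves, can cross frontier leaves. So you must work with the boundedly many pieces of $l_n\cap\Omega_\lambda$ rather than assume the cutting curves lie in $\Omega_\lambda$. You then still have to rule out the images accumulating on a nondegenerate arc of $\del\lambda$; the paper does this using density of ideal points of $\lambda^s$ and $\lambda^u$.

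The missing idea at frontier points is to use rays \emph{transverse} to the frontier, and to argue locally at $x_\infty$ rather than along the whole chain. Suppose $x_\infty$ lies on a stable frontier component $l$. Choose a closed disk $D\subset\orb$ such that:
\begin{itemize}
\item $D\cap l$ is an interval around $x_\infty$;
\item $D\ssm l\subset\Omega_\lambda$;
\item $\del D\ssm l$ is an arc whose two ends are rays $u_1,u_2$ of $\Omega^u_\lambda$ terminating at points $p_1,p_2\in l$.
\end{itemize}
This is possible since $l$ is transverse to $\orb^u$ away from blown-up segments. Because $p_1,p_2$ lie on the same frontier component, the boundary map of \cite{LMT_simuc} gives $\omega_\lambda(p_1)=\omega_\lambda(p_2)=\omega_\lambda(l)$. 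So $\omega_\lambda(\del D\ssm l)$ is a properly embedded line in $\lambda$ whose two ends both converge to $\omega_\lambda(l)$, and the closure of $\omega_\lambda(D\ssm l)$ meets $\del\lambda$ only in $\omega_\lambda(l)$. For large $i$ we have $x_i\in D\ssm l$, and $\omega_\lambda(x_i)$ escapes compact subsets of $\lambda$. For the escape, your observation that $\omega_\lambda|_{\Omega_\lambda}$ is a homeomorphism suffices; the remark about flow-lines exiting is not needed. Hence $\omega_\lambda(x_i)\to\omega_\lambda(l)$.

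Your Step 2 matches the paper and is fine: a closed surjection is a quotient map, and monotonicity and the collapse of chains are inherited from \cite{LMT_simuc}.
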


\begin{proof}
The stable/unstable foliations of $\wt \varphi$ intersect $\lambda$ in a pair of singular foliations that we denote by $\lambda^{s/u}$. By results of Fenley \cite[Corollary 5.5, Proposition 6.3]{Fen09}, the endpoints of rays in leaves of $\lambda^s$ are well-defined and dense in $\del \lambda$, and the same is true for $\lambda^u$. The homeomorphism $\lambda \to \Omega_\lambda$ identifies these foliations with the restrictions of $\orb^{s/u}$ to $\Omega_\lambda$, which we donote $\Omega_\lambda^{s/u}$. 
If $r$ is a ray in $\Omega_\lambda^{s/u}$
then it determines a ray $r_\lambda = \omega_\lambda(r)$ in $\lambda^{s/u}$,
defining a map from the endpoints of stable/unstable rays in $\Omega_\lambda$ to the endpoints of stable/unstable rays in $\lambda$. In \cite[Lemma 3.3 and 3.4]{LMT_simuc} it is shown that this assignment uniquely extends to a continuous, monotone map $\partial \ol \Omega_\lambda \to \partial \lambda$, which collapses each frontier chain to a single point.

What we must show is that the total map $\omega_\lambda\colon \ol\Omega_\lambda\to \lambda\cup \del \lambda$ is continuous, and for this it is enough to show continuity on $\del\ol\Omega_\lambda$. Being a quotient map will then follow from closedness of $\omega_\lambda$. The key fact we will use is that, by the description above: 
\begin{itemize}
\item if $r$ is a ray in $\Omega_\lambda^{s/u}$ limiting to a point $x$ in $\partial \ol \Omega_\lambda$, then $\omega_\lambda(r)$
is a  ray in $\lambda^{s/u}$ limiting to the point $\omega_\lambda(x) \in \partial \lambda$.
\end{itemize}

Suppose that $x_\infty \in \fr (\Omega_\lambda)$, and consider a sequence $x_i \in \Omega_\lambda$ converging to $x_\infty$. Let $l$ be the (say, stable) frontier component of $\Omega_\lambda$ that contains $x_\infty$, and
note that the sequence $\omega_\lambda(x_i)$ escapes compact subsets of $\lambda$.
Since $l$ is transverse to $\orb^u$ outside of a closed interval (i.e. away from any segments coming from a dynamic blowup), there is a closed disk $D \subset \orb$ whose boundary meets $l$ along an interval containing $x_\infty$ such that $D \ssm l \subset \Omega_\lambda$ and $\partial D \ssm l$ is an arc whose ends are rays in $\Omega_\lambda^u$ denoted $u_1, u_2$. The endpoints of $u_1$ and $u_2$ are points in $l$ that we denote $p_1, p_2$.
Because the endpoints of the $u_i$ are in $l$, which
is a component of the frontier of $\Omega_\lambda$,
we have 
$\omega_\lambda(p_1) =  \omega_\lambda(p_2)$.
It follows that $\omega_\lambda(u_1)$ and $\omega_\lambda(u_2)$ are
rays in $\lambda^u$ with same ideal point in $\del \lambda$.
Hence, $\omega_\lambda(\partial D \ssm l)$ is a bi-infinite line in $\lambda$ whose endpoints both converge to the single point $\omega_\lambda(x_\infty) = \omega_\lambda(l)$. Since $D\ssm l$ is a neighborhood of $x_\infty$ in $\ol \Omega_\lambda$, we have $x_i \in D$ for sufficiently large $i$. The closure of $\omega_\lambda(D\ssm l)$ in $\lambda \cup \partial \lambda$
meets $\partial \lambda$ in the single point $\omega_\lambda(x_\infty) = \omega_\lambda(l)$, and  this point must be the limit of $\omega_\lambda(x_i)$ since the sequence escapes compact sets.

Next, suppose that $x_\infty \in \partial_\infty \Omega_\lambda \subset \partial \orb$. Before proceeding with the proof we need a paragraph of technical setup.

 By \cite[Proposition 3.33]{fenley2012ideal}, $x_\infty \in \partial \orb$ has a neighborhood basis in $\ol\orb$ consisting of sets bounded by \emph{polygonal paths}. In more detail, if $x_\infty$ is not the endpoint of a leaf of $\orb^s$ ($\orb^u$ respectively), then it has a countable neighborhood basis consisting of the closure in $\ol\orb$ of half-spaces $D_n$, $n \in {\mathbb{N}}$, in $\orb$ bounded by leaves of $\orb^s$ ($\orb^u$ respectively). Otherwise, $x_\infty$ is the endpoint of leaves of both $\orb^s$ and $\orb^u$. Since $M$ is atoroidal, the number of such leaves with ideal point $x_\infty$ is finite and bounded by \cite[p. 34]{fenley2012ideal}. Let $k$ be such a
bound. In this case, $x_\infty$ has a neighborhood basis such that each basis element $D_n$ is bounded by a properly embedded bi-infinite line in $\orb$ consisting of $k-2$ leaf segments alternating between stable and unstable, together with stable/unstable rays at the ends (see \cite[Definition 3.8]{fenley2012ideal}). 
In either case, let $l_n\subset \orb$ be the boundary of $D_n$. 

Continuing with the proof, let $\omega_\lambda(l_n\cap \Omega_\lambda)$ 
have boundary points $a_n, b_n$; these are well defined by the continuity of $\omega_\lambda$ along closures of stable/unstable rays.
By the continuity of $\omega_\lambda$ in $\del \ol \Omega_\lambda$, we have 
\[
\lim_{n\to\infty}a_n=\lim_{n\to\infty} b_n= \omega_\lambda(x_\infty).
\]
The sequence $\omega_\lambda(l_n\cap \Omega_\lambda)$ escapes compact sets in $\lambda$. Hence, if it does not converge to $\omega_\lambda(x_\infty)$ then it limits to a nondegenerate interval $I$ in $\del \lambda$. Since $l_n\cap \Omega_\lambda$ has a bounded number of intervals or rays in leaves of $\Omega_\lambda^{s/u}$,
some of the images of these intervals or rays in $\lambda$ converge to a nondegenerate interval in $\partial \lambda$. This contradicts the fact that the ideal points of leaves of $\lambda^s$ (or $\lambda^u$)
are dense in $\del \lambda$. Hence $\omega_\lambda(l_n\cap \Omega_\lambda)$ converges to $\omega_\lambda(x_\infty)$, so $\omega_\lambda$ is continuous at $x_\infty$. 

Having shown continuity at all points in $\fr(\Omega_\lambda)$ and $\del_\infty \Omega_\lambda$, we are done.
\end{proof}

Since $\omega_\lambda$ is constant on frontier chains of $\Omega_\lambda$, its restriction to $\partial \ol \Omega_\lambda$ induces a unique monotone map 
\[
\pi_\lambda \colon \partial \orb \to \partial \lambda
\]
that is constant on arcs of $\partial \orb$ that are spanned by the frontier chains (see \cite[Construction 3.5]{LMT_simuc}). These are the monotone maps appearing in \Cref{mainthm}. The core of $\pi_\lambda$ is the complement of its gaps, where a gap is a maximal open interval on which $\pi_\lambda$ is constant.
By construction, the core of $\pi_\lambda$ is contained in $\partial_\infty \Omega_\lambda$ (in fact, it is exactly the set of nonisolated points of $\partial_\infty \Omega_\lambda$), and $\omega_\lambda$ and $\pi_\lambda$ have equal restrictions to $\mathrm{core}(\pi_\lambda)$. By \cite[Theorem 1.2]{LMT_simuc}, each gap of $\pi_\lambda$ is spanned by a single frontier chain.

\smallskip
The following is from \cite{Fen09}; it can also be observed by considering how $\lambda$ meets an unstable leaf in $\wt M$ whose projection to $\orb$ crosses a component of $\frs (\Omega_\lambda)$.

\begin{lemma} \label{l.below}
For each component $l^s$ of $\frs (\Omega_\lambda)$, $\langle l^s \rangle$ lies below $\lambda$ in $\wt M$; similarly, for each component $l^u$ of $\fru (\Omega_\lambda)$, $\langle l^u \rangle$ lies above $\lambda$.
\end{lemma}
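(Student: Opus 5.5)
The idea is to work inside the flow line through a point of the frontier. Since $\lambda$ is transverse to $\wt\phi$, it meets each orbit at most once. Take a component $l^s$ of $\frs(\Omega_\lambda)$ and a point $q\in l^s$. Then $q\notin\Omega_\lambda$, so the orbit $\langle q\rangle$ is disjoint from $\lambda$. Since $\lambda$ is properly embedded and separates $\wt M$, each such orbit lies entirely on one side of $\lambda$: above it (on the positive side with respect to the transverse orientation given by the flow) or below it. The set $\langle l^s\rangle$ is connected and disjoint from $\lambda$, so it lies entirely on one side. It therefore suffices to decide the side for one orbit, using the dynamics near the frontier.

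To decide the side, I will follow the hint and pick an unstable leaf crossing $l^s$. Since $l^s$ is a stable slice leaf transverse to $\orb^u$ away from blown-up segments, choose $q\in l^s$ and a short unstable segment $\tau$ through $q$ crossing $l^s$, with one half $\tau^+$ in $\Omega_\lambda$. This is possible because near $q$ one side of $l^s$ lies in $\Omega_\lambda$, as in the disk $D$ in the proof of \Cref{lem:omega}.

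Now look at the unstable leaf $U=\langle\tau\rangle$, a strip foliated by flow lines. The intersection $\lambda\cap\langle\tau^+\rangle$ is a curve $c$ in $U$ meeting each flow line over $\tau^+$ exactly once. As the point of $\tau^+$ tends to $q$, the point of $c$ on its orbit must escape every compact set of $\wt M$, since it leaves compact sets of $\lambda$. I claim it escapes in the positive flow direction. Orbits in $U$ are backward asymptotic, so flow lines in $U$ near $\langle q\rangle$ converge to $\langle q\rangle$ uniformly in the negative direction and diverge only in the positive direction. Suppose the curve $c$ escaped downward, that is, its height tends to $-\infty$ on the orbits. By backward asymptoticity, points of $c$ would then lie close to points of $\langle q\rangle$. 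Since $\lambda$ is a leaf of a foliation transverse to the flow, the foliation $\wt\FF$ has local product charts along $\langle q\rangle$, and a leaf passing close to a point of $\langle q\rangle$ meets $\langle q\rangle$ itself. This contradicts $q\notin\Omega_\lambda$. Hence $c$ goes to $+\infty$ along the orbits, so $\lambda$ passes above $\langle q\rangle$; that is, $\langle q\rangle$, and hence $\langle l^s\rangle$, lies below $\lambda$.

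The unstable case is symmetric. I use a stable leaf through a point of $l^u$, forward asymptoticity, and conclude that $\lambda$ escapes downward, so $\langle l^u\rangle$ lies above $\lambda$.

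\textbf{Main obstacle.} The delicate step is making the asymptoticity argument rigorous. "Close to $\langle q\rangle$" must be uniform along a non-compact region, and leaves of $\wt\FF$ can approach an orbit without meeting it. I would handle this by projecting to $M$ and using compactness: since $\FF$ is transverse to $\phi$, there is $\epsilon>0$ such that any $\epsilon$-short flow segment lies in a product chart, so any leaf within $\epsilon$ of a point meets the nearby orbit. Combined with the exponential backward contraction in unstable leaves, this gives the contradiction. For almost pseudo-Anosov flows, blown-up segments must be avoided by choosing $q$ in the part of $l^s$ that is transverse to $\orb^u$.
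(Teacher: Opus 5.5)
Your strategy is the one the paper has in mind. The paper only cites Fenley and suggests looking at an unstable leaf crossing $l^s$, which is exactly what you do. The reductions are fine: $\langle l^s\rangle$ is connected and misses $\lambda$, so one orbit decides the side, and a regular unstable segment $\tau$ through $q\in l^s$ enters $\Omega_\lambda$.

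The gap is in the step you flag yourself. You pass from ``orbits in $U$ are backward asymptotic'' to ``orbits over $\tau^+$ near $q$ stay uniformly close to $\langle q\rangle$ for all negative times.'' You need the latter so that points of $c$ at very negative height lie within the product-chart scale $\epsilon$ of $\langle q\rangle$. Pointwise asymptoticity does not give this. Your proposed justification, uniform backward contraction along unstable leaves, is also not available under the paper's standing hypothesis that $\phi$ is almost pseudo-Anosov. Take a blown-up annulus $A$ whose orbits are backward asymptotic to one boundary orbit $\gamma_1$ and forward asymptotic to the other, $\gamma_2$. Unstable leaves close to $A$ run parallel to $A$, and near $\gamma_2$ that is the stable direction of $\gamma_2$, so the backward flow stretches them there. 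Choosing $q$ off the blown-up segments controls only the local picture at $q$, not where the backward orbits of $q$ and nearby points travel. A smaller point: you also tacitly assume the height of $c$ tends to $+\infty$ or to $-\infty$ rather than oscillating. This needs a connectedness or subsequence argument.

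Both issues disappear if you compare just two orbits.
\begin{itemize}
\item Fix one $y\in\tau^+$ and let $\beta=\langle y\rangle$ meet $\lambda$ at $\beta(h_0)$. Then $\beta((-\infty,h_0))$ lies below $\lambda$.
\item Since $\beta$ and $\alpha=\langle q\rangle$ lie in the same unstable leaf away from blown-up segments, they are backward asymptotic. So some $z=\beta(s_0)$ with $s_0<h_0$ lies within $\epsilon$ of $\alpha$.
\item By the product-chart fact you already invoke, the leaf $\mu$ of $\wt\FF$ through $z$ meets $\alpha$.
\item $\mu\neq\lambda$, because $\beta$ meets $\lambda$ only at $\beta(h_0)$. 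So $\mu$ is disjoint from $\lambda$, and since it contains $z$, it lies entirely below $\lambda$.
\item Hence $\alpha$ has a point below $\lambda$, so $\alpha$, and with it $\langle l^s\rangle$, lies below $\lambda$.
\end{itemize}
The unstable case is symmetric, using a stable leaf crossing $l^u$ and forward asymptoticity. This argument needs asymptoticity of only a single pair of orbits, which does hold for almost pseudo-Anosov flows, and it makes the analysis of how $c$ escapes unnecessary.
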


As a consequence, if $l^s$ is a component of $\frs (\Omega_\lambda)$ the set of points above $\lambda$ does not accumulate on any point in $\langle l^s\rangle$. From this, one can show that $\wt \FF$-leaves can be continuously extended to the lens boundary in the following sense.

\begin{lemma}
\label{l.cont1}
Suppose that $\lambda$ is a leaf of $\wt \FF$. 
The map $s\colon \Omega_\lambda\to \wt M=\intr(\mb L)$ carrying $p$ to $\langle p\rangle \cap \lambda$ continuously extends to a map $\ol s\colon \ol\Omega_\lambda\to \mb L$ defined by
\[
\ol s(p)=
\begin{cases}
s(p) &\text{ if $p\in \Omega_\lambda$}\\
(p,+\infty) &\text{ if $p\in \frs (\Omega_\lambda)$}\\
(p,-\infty)& \text{ if $p\in \fru (\Omega_\lambda)$}\\
p & \text{ if $p\in \del _\infty \ol\Omega_\lambda$}
\end{cases}
\]
which is a homeomorphism onto its image.
\end{lemma}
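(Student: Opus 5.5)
We need to show that $\ol s$ is continuous; injectivity and the homeomorphism property then follow from compactness. Write points of $\mb L$ away from the collapsed equator as $(q,t)\in \orb\times\ol\R$, with $s(p)=(p,t_\lambda(p))$, where $t_\lambda\colon\Omega_\lambda\to\R$ is the continuous height function of $\lambda$ in the fixed identification $\wt M\cong\orb\times\R$.

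\textbf{Injectivity and the homeomorphism property.} The map $\ol s$ is injective. On $\Omega_\lambda$, $\fr^s$, $\fr^u$ and $\del_\infty\ol\Omega_\lambda$, the images lie in the disjoint sets $\intr\mb L$, $\orb^+$, $\orb^-$ and $\del\orb$ respectively, and composing with the flattening map $f$ gives the identity on $\ol\Omega_\lambda$. In fact $f\circ\ol s=\mathrm{id}$, so continuity of $\ol s$ is all we need: $\ol\Omega_\lambda$ is compact and $\mb L$ is Hausdorff, so a continuous injection is a homeomorphism onto its image. Continuity at points of $\Omega_\lambda$ is clear.

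\textbf{Continuity at the frontier.} Let $x_\infty\in l^s\subset\frs(\Omega_\lambda)$ and take $x_i\to x_\infty$ in $\ol\Omega_\lambda$. Points $x_i$ lying on $\fr^s$ are sent to $(x_i,+\infty)\to(x_\infty,+\infty)$. Only finitely many frontier components meet a small neighborhood of $x_\infty$, and those near $x_\infty$ other than $l^s$ itself could only accumulate on it. So we may assume $x_i\in\Omega_\lambda$ and must show $t_\lambda(x_i)\to+\infty$.

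Suppose instead that a subsequence has $t_\lambda(x_i)\le C$. The points $s(x_i)=(x_i,t_\lambda(x_i))$ lie on $\lambda$, and their projections converge to $x_\infty$. The heights cannot go to $-\infty$: by \Cref{l.below} the flowline $\langle x_\infty\rangle$ lies below $\lambda$, and I would use a small transverse unstable-leaf argument (the one suggested before \Cref{l.below}) to rule this out. So a further subsequence has bounded height and converges in $\wt M$ to a point $y=(x_\infty,t)\in\langle l^s\rangle$. These points are on $\lambda$, hence above-or-on $\lambda$, and they accumulate on $y\in\langle l^s\rangle$. This contradicts the consequence of \Cref{l.below} stated just before the lemma.

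A cleaner route, which I would try first, is the local product picture from the proof of \Cref{lem:omega}. Take the disk $D$ with $D\ssm l\subset\Omega_\lambda$ and consider the unstable leaf segments crossing $l$ in $D$. The part of $\lambda$ over such a segment is a curve in the unstable leaf $\langle u\rangle$, and it must escape upward as it approaches $\langle l\rangle$, since $\langle l\rangle$ lies below $\lambda$. Uniformity over the compact family of segments then follows from the same non-accumulation argument. The unstable case is symmetric, with $-\infty$ in place of $+\infty$.

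\textbf{Continuity at ideal points (the main obstacle).} Let $x_\infty\in\del_\infty\ol\Omega_\lambda$. In $\mb L$, a neighborhood basis of the collapsed point $x_\infty$ is given by the sets $(\ol U\times\ol\R)/\!\sim$, with $U$ ranging over a neighborhood basis of $x_\infty$ in $\ol\orb$. This holds because the annulus fiber is collapsed and $\ol\orb\times\ol\R$ is compact. Since $f\circ\ol s=\mathrm{id}$, any $x_i\to x_\infty$ has $\ol s(x_i)$ with first coordinate in $U$ eventually, so $\ol s(x_i)\to x_\infty$ regardless of height. The subtle point is exactly verifying this neighborhood-basis claim for the quotient topology on $\mb L$. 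It follows from the tube lemma applied to the compact fiber $\{x_\infty\}\times\ol\R$, and that is the step I would check most carefully.
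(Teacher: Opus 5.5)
Your outline matches the paper's proof. Injectivity follows from $f\circ\ol s=\mathrm{id}$, then continuity, then compact-to-Hausdorff. At ideal points, convergence is forced by the quotient topology of $\mb L$, and your tube-lemma argument there is correct and is all that is needed. At a point $x_\infty$ of a stable frontier leaf $l^s$, bounded heights are excluded because $\lambda$ is closed and misses $\langle x_\infty\rangle$.

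However, you have put the difficulty in the wrong place. The ideal case is routine. The step with real content is showing that the heights $t_\lambda(x_i)$ cannot tend to $-\infty$; this is exactly what decides between $(x_\infty,+\infty)$ and $(x_\infty,-\infty)$. You only promise this step (``I would use a small transverse unstable-leaf argument''). Your ``cleaner route'' does not supply it either: it asserts that $\lambda\cap\langle u\rangle$ escapes upward, then defers uniformity to ``the same non-accumulation argument,'' which is the missing step again.

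The missing idea is to flow forward. Suppose $t_\lambda(x_i)\to-\infty$ along a subsequence, and fix $t\in\R$. For large $i$ the point $(x_i,t)$ lies on the forward orbit of $s(x_i)\in\lambda$. It is therefore strictly above $\lambda$, since the flow crosses $\wt\FF$ in one direction and each orbit meets $\lambda$ at most once. But $(x_i,t)\to(x_\infty,t)\in\langle l^s\rangle$, so points above $\lambda$ accumulate on $\langle l^s\rangle$. This contradicts the consequence of \Cref{l.below}, and it is the paper's argument.

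A smaller point: you assert without justification that only finitely many frontier components come near $x_\infty$. This matters, because an unstable frontier leaf accumulating at $x_\infty$ would make $\ol s$ discontinuous there. You can fix this in either of two ways:
\begin{itemize}
\item obtain $\ol\Omega_\lambda\cap N\subset\Omega_\lambda\cup l^s$ for a small neighborhood $N$ of $x_\infty$ from the half-disk $D$ in the proof of \Cref{lem:omega}; or
\item as the paper does, test only sequences in the dense set $\Omega_\lambda$, which suffices because $\mb L$ is compact Hausdorff, hence regular, so the limit-defined extension is automatically continuous.
\end{itemize}
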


We remark that (a) Given a point $p\in \ol\orb$, by $(p,\pm \infty)$ we mean the image of $(p,\pm \infty)$ in $\mb L$ under the collapse $\ol\orb\times \ol \R\to\mb L$, and (b) for the last case we are using the identification of $\del\orb$ with the equator of $S^2_{\mb L}$.

\begin{proof}
First, note that the function $\ol s$ is injective by construction
because for instance the map $p \to (p,\infty)$ is injective 
from $\orb$ to $\mb L$.

For continuity, 
let $(z_n)$ be a sequence of points in $\Omega_\lambda$ converging to a point $z\in \del \ol\Omega_\lambda$. If $z\in \del_\infty \Omega_\lambda$, it is clear that the sequence $s(z_n)$ converges to $z$ by the continuity of
the map $\mb e$ of \Cref{s.3}.

Otherwise, suppose that $z\in \fr^s(\Omega_\lambda)$. Note that $(s(z_n))$ can only accumulate on $z\times \ol \R$. The sequence cannot accumulate on any point in $z\times \R$, because $\lambda$ is closed in $\wt M$ so this would force $\lambda$ to intersect $\langle z\rangle$, a contradiction. On the other hand, the sequence cannot accumulate on $(z,-\infty)$; if it did, we could find a point on $\langle z\rangle$ that is accumulated upon by forward orbits from the $s(z_n)$. This is a contradiction because $\langle l^s\rangle$ lies below $\lambda$ in $\wt M$ by \Cref{l.below}.
We conclude that $\lim_{n\to\infty} s(z_n)=(z,\infty)$. The case when $z\in \fr^u(\Omega_\lambda)$ is symmetric.

The fact that $\ol s$ is a homeomorphism onto its image follows from continuity and injectivity, since $\ol\Omega_\lambda$ is compact and 
$\mb L$ is Hausdorff.
\end{proof}

The image of $\ol s$ is $\lambda$, together with a circle in $S^2_{\mb L}$ which is the image of $\del \ol \Omega_\lambda$. We denote this circle by $\del_{\mb L}\lambda$.
We now turn to the:
\begin{proof}[Proof of \Cref{mainthm}]
Fix a leaf $\lambda$ of $\wt{\mc F}$. The maps $h$, $\ol s$, and $\pi_\lambda$ fit into the diagram below:
\begin{center}
\begin{tikzcd}
\lambda\cup\del_{\mb L}\lambda \arrow[swap]{d}{\ol s^{-1}}\arrow{r}{h} & \wt M\cup S^2_\infty \\
\ol \Omega_\lambda \arrow[swap]{d}{\omega_\lambda}\arrow[densely dotted ]{ur} & \\
 \lambda\cup \del \lambda\arrow[densely dotted, bend right,swap]{uur}{\mb i _\lambda} & 
\end{tikzcd}
\end{center}

In the first line of the diagram, the function $h$ is the restriction
of $h: \mb L \to \wt M \cup S^2_\infty$ to $\ol\Omega_\lambda$.
Recall from \Cref{s.4} that $h$ is continuous.
Since $\ol s$ is a homeomorphism from $\ol \Omega_\lambda$ 
to its image $\lambda \cup \del_L \lambda$, 
this induces a continuous map from $\ol\Omega_\lambda$ to
$\wt M \cup S^2_\infty$ making the upper triangle commute, shown in the diagram as a dotted arrow.

Since $\omega_\lambda$ is a quotient map, there is then an induced continuous map from $\lambda\cup \del \lambda$ making the lower triangle commute. We define this map to be $\mb i_\lambda$. Unwinding the definitions shows the restriction of $\mb i_\lambda$ to $\lambda$ is simply the inclusion of $\lambda$ into $\wt M$, so $\mb i_\lambda$ continuously extends the inclusion map as claimed.
Indeed, in \Cref{l.cont1} when considering 
a component $l$ of $\fr^s(\Omega_\lambda)$ all flow lines in $\wt M$
contained in $\langle l\rangle$ are either forward asymptotic or eventually fellow travel, so their forward endpoints in $S^2_\infty$ are all equal. The same is true for the image of the
ideal endpoints of $l$ in $\del \orb$
by continuity of the map $h\colon \mb L \to S^2_\infty$.
This implies that $\omega_\lambda$ induces the map $\mb i_\lambda$.

Finally, since $\core(\pi_\lambda) = \core(\omega_\lambda)$, and is a subset of $\partial_\infty  \Omega_\lambda = \ol \Omega_\lambda \cap \partial \orb$. The restriction of $h$ to $\partial \orb$ is equal to $\mb e$ (after identifying $\partial \orb$ with the equator of $S^2_{\mb L}$), the theorem follows.
\end{proof}

We conclude by giving the proof of \Cref{cor:finite}.

\begin{proof}[Proof of \Cref{cor:finite}]
Using the diagram in \Cref{mainthm}, the corollary follows from the fact that the fibers of $\mb e \colon \partial \orb \to S^2_\infty$ are uniformly bounded. This is true because for each $p \in S^2_\infty$, $e^{-1}(p)$ is a collection of points that are joined by a collection of nonintersecting leaves of $\orb^{s/u}$ whose closure in $\ol \orb$ is connected \cite[Section 6]{Fen16}. 
If this collection contains $k$ leaves, then $\orb$ contains a chain of perfect fits of length at least $k$ \cite{Fen16}. (Here, one can take the definition of a ``perfect fit" as an innermost pair of leaves having the same endpoint in $\partial \orb$.)
Hence, by \cite[Theorem C]{Fen16}, there are at least $k$ closed orbits of $\varphi$ which are pairwise freely homotopic in $M$. 
However, by \cite[Main Theorem]{Fen16}, $k$ is bounded because $\phi$ is quasigeodesic. This completes the proof.
\end{proof}

\bibliographystyle{alphaurl}
\bibliography{bibliography}

\end{document}